\documentclass{amsart}

\usepackage{amssymb,latexsym,amsmath,extarrows}
\usepackage{graphicx}

\newtheorem*{acknowledgement}{Acknowledgements}
\newtheorem{theorem}{Theorem}[section]
\newtheorem{lemma}[theorem]{Lemma}
\newtheorem{proposition}[theorem]{Proposition}
\newtheorem{remark}[theorem]{Remark}

\newtheorem{definition}[theorem]{Definition}

\newcommand{\q}{\quad}

\newcommand{\abs}[1]{\lvert#1\rvert}


\newcommand{\e}{\varepsilon}



\newcommand{\wh}{\widehat}

\newcommand{\ZR}{\mathbb{R}}

\newcommand{\ZB}{\mathbb{B}}

\newcommand{\Id}{{\bf 1}}

\newcommand{\bS}{{\bf S}}

\newcommand{\bR}{{\bf R}}

\newcommand{\cQ}{ {\mathcal Q}}
\newcommand{\cV}{ {\mathcal V}}

\newcommand{\bq}{{\bf q}}

\begin{document}

\title{$l^p$ decoupling for restricted $k$-broadness}

\author{Xiumin Du}

\address{
Xiumin Du\\
School of Mathematics\\
Institute for Advanced Study\\
Princeton, NJ, 08540, USA}

\email{xdu@math.ias.edu}

\author{Xiaochun Li}

\address{
Xiaochun Li\\
Department of Mathematics\\
University of Illinois at Urbana-Champaign\\
Urbana, IL, 61801, USA}

\email{xcli@math.uiuc.edu}

\begin{abstract}
In \cite{G2}, to prove Fourier restriction estimate using polynomial partitioning, Guth introduced the concept of $k$-broad part of regular $L^p$ norm and obtained sharp $k$-broad restriction estimates. To go from $k$-broad estimates to regular $L^p$ estimates, Guth employed $l^2$ decoupling result. In this article, similar to the technique introduced by Bourgain-Guth \cite{BG}, we establish an analogue to go from regular $L^p$ norm to its $(m+1)$-broad part, as the error terms we have the restricted $k$-broad parts ($k=2,\cdots,m$). To analyze the restricted $k$-broadness, we prove an $l^p$ decoupling result, which can be applied to handle the error terms and recover Guth's linear restriction estimates in \cite{G2}.  

\end{abstract}

\maketitle

\section{Introduction} \label{intro}
\setcounter{equation}0

In \cite{G2}, to prove Fourier restriction estimate using polynomial partitioning, Guth introduced the concept of $k$-broad part of regular $L^p$ norm and obtained sharp $k$-broad restriction estimates. Applying the $l^2$ decoupling theorem (cf. \cite{B} \cite{BD1}) to go from $k$-broad estimates to regular $L^p$ estimates, Guth obtained new range in \cite {G2} for the Fourier restriction estimate, which improves the result in \cite {BG}. 

In this article, regarding the broadness, we establish an analog of Bourgain-Guth's technique in \cite{BG} to go from multilinear estimates to regular $L^p$ estimates. Going from regular $L^p$ norm to its $(m+1)$-broad part, as the error terms we have the ``restricted'' $k$-broad parts, $k=2,\cdots,m$ (Lemma \ref{lem-rec}). Here by ``restricted'' we mean that functions under consideration are concentrated in small neighborhoods of $k$-dimensional subspaces. To analyze the restricted $k$-broadness, we prove an $l^p$ decoupling result (Theorem \ref{thm-kp}), which can be applied to handle the error terms and recover Guth's linear restriction estimates in \cite{G2} (see Section \ref{appli}). The  proof of our $l^p$ decoupling result uses transverse equi-distribution estimates and the $k$-broad restriction estimates (for some special functions) from \cite{G2}. Therefore, our technique to go from $k$-broad estimates to linear estimates is self-contained, without resorting to $l^2$ decoupling theorem.

\vspace{.15 in}

First, we recall the definition of the $k$-broad part of regular $L^p$-norm from \cite{G2}, and then define a corresponding restricted $k$-broad part.

For any dyadic number $D$, $\cQ_D$ denotes the collection of all dyadic cubes in $\ZR^n$, of
side length $D$. Let $\Sigma$ be the truncated paraboloid,
$$
\Sigma :=\left\{(\xi,\abs \xi^2)\in \ZR^n : \xi \in B^{n-1}(0,1)\right\}\,.
$$
We partition $N_{1/D^2}\Sigma$, the $1/D^2$-neighborhood of $\Sigma$, into disjoint blocks $\theta$ of dimensions $\frac{1}{D}\times\cdots\times \frac{1}{D}\times\frac{1}{D^2}$. Denote this collection of blocks by $\Theta_D$. For each block $\theta$, let $e(\theta)$ be the unit normal vector to $\Sigma$ at the center of $\theta$.

For any function $F:\ZR^n \rightarrow \mathbb C$ with Fourier support in $N_{1/D^2}\Sigma$, $F_\theta$ represents Fourier restriction of $F$ in $\theta$, that is, $\wh{F_\theta}=\Id_\theta\wh F$. From the partition of unity, we write
\begin{equation}
 F(x)= \sum_{\theta \in \Theta_D}F_\theta(x)\,.
\end{equation}
For any positive integer
$d\leq n$,  let $\mathcal V_{d}$ denote the collection of all $d$-dimensional subspaces
in $\ZR^n$. For any $V\in \cV_d$, let $\Theta_{D,V}$ denote the collection of blocks near $V$:
\begin{equation}
 \Theta_{D, V}:= \big\{\theta\in \Theta_D:    {\rm dist}\big(e(\theta), V\big)\leq D^{-1}  \big\}\,
\end{equation}
and define
\begin{equation}
F_{D,V}:=\sum_{\theta\in \Theta_{D,V}} F_\theta\,.
\end{equation}
For any $V_1, \cdots, V_A\in \cV_d$, let $\Theta_D(V_1, \cdots, V_A)$ denote the collection of blocks away from $V_1, \cdots, V_A$:
\begin{equation}
 \Theta_D(V_1, \cdots, V_A):=\big\{\theta\in \Theta_D:
{\rm dist}(e(\theta), V_a) > D^{-1} \,\,{\rm for}\,\, {\rm all}\,\, a\in\{1, \cdots, A\}    \big  \}\,.
\end{equation}

With the notations above, we recall the definition of $k$-broad part of regular $L^p$ norm from \cite{G2}:
\begin{definition}
For any dyadic number $D$, any Schwartz function
$F$ on $\ZR^n$ with Fourier support in $N_{1/D^2}\Sigma$, any integer $2\leq k\leq n$, any positive integer $A$, and any subset $U$ in $\ZR^n$, we define the \textbf{$k$-broad part} of $\|F\|_{L^p(U)}$ with the parameters $A$ and $D$ by
\begin{equation}\label{def-Brk}
 \big\|F\big\|_{BL^p_{k, A, D}(U)}:=
\bigg( \sum_{\bq\in \cQ_{D^2}: \bq \subset U} \min_{V_1, \cdots, V_A\in \cV_{k-1}}
\max_{\theta\in \Theta_{D}(V_1, \cdots, V_A)} \int_{\bq} \big|
  F_{\theta} (x)
\big|^p \, dx \bigg)^{1/p}\,.
\end{equation}
\end{definition}

Now we are ready to define the restricted $k$-broad parts, which arise naturally as error terms when going from regular $L^p$-norm to its higher-order broad part (see Section \ref{sec:lin-broad}). 
For any dyadic number $D$, $D_0$ stands for
the largest dyadic number not exceeding $D^{\e^{\sqrt{L}/2}}$, where $L$ is a large constant and $\e$ is a tiny positive number. 
\begin{definition}
For any dyadic number $D$, any Schwartz function $F$ on $\ZR^n$ with Fourier support in $N_{1/D^2}\Sigma$, any integer $2\leq k\leq n$, any positive integer $A$,
any subset $U$ in $\ZR^n$, and any $k$-dimensional subspace $V\in \cV_k$,
we call the following term
\begin{equation}
\big\|F_{D,V}\big\|_{BL^p_{k, A, D_0}(U)}
\end{equation}
the \textbf{restricted
$k$-broad part} of $\|F\|_{L^p(U)}$ to $V$ with the parameters $A$, $D$ and $D_0$.
\end{definition}

\begin{remark}
We can write out the restricted $k$-broad part explicitly:
\begin{align*}
 &\big\|F_{D,V}\big\|_{BL^p_{k, A, D_0}(U)}\\
 := &\bigg( \sum_{\bq\in \cQ_{D_0^2}: \bq \subset U} \min_{V_1, \cdots, V_A\in \cV_{k-1}}
\max_{\theta\in \Theta_{D_0}(V_1, \cdots, V_A)} \int_{\bq} \big|
  \sum_{\theta'\in \Theta_{D,V}:\theta'\subset \theta} F_{\theta'} (x)
\big|^p \, dx \bigg)^{1/p}\,.
\end{align*}
\end{remark}

It is straightforward to get
\begin{equation}\label{semi}
 \big\| \sum_{j=1}^N F_j \big\|_{BL^p_{k, A, D}(U)}\leq
   N^C \sum_{j=1}^N \big\| F_j \big\|_{BL^p_{k,A/N,D}(U)}\,,
\end{equation}
where $C$ is an absolute constant.
This can be proved by noticing
\begin{align*}
&\min_{V_1, \cdots, V_A\in \cV_{k-1}}\,
\max_{\theta\in \Theta_D (V_1, \cdots, V_A)} (F_{1,\theta}+F_{2,\theta}) \\
\leq &\min_{V_1, \cdots, V_{A'}\in \cV_{k-1}}\,
\max_{\theta\in \Theta_{D}(V_1, \cdots, V_{A'})}F_{1,\theta}
+
\min_{V_{A'+1}, \cdots, V_A\in \cV_{k-1}}\,
\max_{\theta\in \Theta_{D}(V_{A'+1}, \cdots, V_A)}F_{2,\theta}\,,
\end{align*}
for any positive integer $A'$ with $A'\leq A$, and nonnegative functions $F_{1, \theta}$, $F_{2, \theta}$. The inequality (\ref{semi}) serves as weak Minkowski inequality and henceforth
the broad part behaves similarly like a norm.

\vspace{.15 in}
Our main result is the following:
\begin{theorem}\label{thm-kp}
Suppose that $2\leq k\leq n$ and $2\leq p \leq \frac{2k}{k-1}$. Then
for any $\e>0$, there exist constant $C_\e$ and large number $\bar A(\e)$ such that
\begin{equation}\label{dec-0}
  \| F_{D,V}\|_{BL^p_{k, A,D_0}(B)}\leq C_\e D^{(k-1)(\frac 1 2 -\frac 1 p)+\e}
\left( \sum_{\theta\in \Theta_{D,V}}\big\| F_\theta \big\|_{L^p(B)}^p \right)^{1/p}
\end{equation}
holds for any dyadic number $D\geq 1$, any function $F$ on $\ZR^n$ with Fourier support in $N_{1/D^2}\Sigma$, any dyadic cube $B\in\cQ_{D^2}$ in $\ZR^n$, any $k$-dimensional subspace $V\in \cV_k$, and any integer $A\geq \bar A (\e)$.

\end{theorem}

The exponent $(k-1)(\frac 1 2 -\frac 1 p)$ in Theorem \ref{thm-kp} is sharp. To see this, we consider an example where, for a $k$-dimensional subspace $V$, there are $D^{k-1}$ blocks $\theta$ in $\Theta_{D,V}$, and for each block $\theta$, $|F_{\theta}(x)|\sim 1$ on $B\in\cQ_{D^2}$. Since only a tiny fraction of blocks lie near to any $(k-1)$-dimensional subspace, the $k$-broad part is equivalent to regular $L^p$-norm.
Using random sign argument, we can also arrange that $$|F_{D,V}(x)| \sim (\sum_{\theta\in \Theta_{D,V}} |F_\theta(x)|^2)^{1/2}\sim D^{(k-1)/2}$$ for most points $x\in B$. In this scenario, 
$$
\| F_{D,V}\|_{BL^p_{k, A,D_0}(B)} \sim \|F_{D,V}\|_{L^p(B)} \sim D^{\frac{k-1}{2}} D^{\frac{2n}{p}}\,,
$$
$$ \big(\sum_{\theta\in\Theta_{D,V}}\|F_\theta\|^p_{L^P(B^*)}\big)^{1/p}
\sim D^{\frac{k-1}{p}} D^{\frac{2n}{p}}\,.
$$
and therefore,
$$
\frac{\| F_{D,V}\|_{BL^p_{k, A,D_0}(B)}}{\big(\sum_{\theta\in\Theta_{D,V}}\|F_\theta\|^p_{L^P(B^*)}\big)^{1/p}}
\sim D^{(k-1)(\frac 1 2 -\frac 1 p)}\,.
$$
(This example is the same as the example in \cite{G2} showing the sharpness of the $k$-broad estimates.)

In Section \ref{sec:lin-broad}, we prove a lemma which gives the restricted $k$-broad parts ($k=2,\cdots,m$) as the error terms going from regular $L^p$ norm to its $(m+1)$-broad part. In Section \ref{WPD} we review wave packet decomposition. We prove Theorem \ref{thm-kp} in Section \ref{L2} using transverse equi-distribution estimates and the $k$-broad restriction estimates (for some special functions) from \cite{G2}. In Section \ref{appli}, as an application, we show how to go from $k$-broad restriction estimates to linear restriction estimates using our $l^p$ decoupling result.

\vspace{.15in}
\noindent\textbf{List of Notations}:

We use $\e$ to denote a tiny positive number and $L$ to stand for a large constant such that $\e^{L/10}C\leq 1$ for the constant $C$ in 
(\ref{eq-k-k+1}). For any dyadic number $D$, $D_0$ stands for the largest dyadic number not exceeding $D^{\e^{\sqrt L/2}}$.
$C_1\ll C_2$ means that $C_1$ is much smaller than $C_2$;
$C_1\lesssim C_2$ represents that there is an unimportant constant $C$ such that $C_1\leq C C_2$;
and when proving some bound concerning large dyadic number $D$, we write $C_1\lessapprox C_2$ if $C_1 \leq C_\e D^{\e} C_2$ for any $\e>0$.

\section{Passing regular $L^p$ norm to its higher-order broad part} \label{sec:lin-broad}
\setcounter{equation}0

In this section, we get the restricted $k$-broad parts $(k=2,\cdots,m)$ as the error terms going from regular $L^p$-norm to its $(m+1)$-broad part (see Lemma \ref{lem-rec}). The argument used here is in the same spirit as Bourgain-Guth multilinear method in \cite{BG}.

\begin{lemma}\label{lem-k-k+1}
Suppose that $D$ is any dyadic number, $D_0$ is the largest dyadic number not exceeding $D^{\e^{\sqrt L/2}}$, $U$ is a dyadic cube in $\ZR^n$ with side length
larger than $D^2$,
and $2\leq M$, $A$ are integers such that $M^2\leq A$.
Then for $2\leq k\leq n-1$ and any Schwartz function $F$ on $\ZR^n$ with Fourier support
in $N_{1/D^2}\Sigma$, we have
\begin{align} \label{eq-k-k+1}
  \big\| F \big\|_{BL^p_{k, A, D_0}(U)} \leq
   &D^{n}  \big\| F \big\|_{BL^p_{k+1, \frac{ A}{M}, D}(U)} \\
   +  &\big(\frac{A}{M}\big)^C \bigg(      \sum_{B\in \cQ_{D^2}: B\subset U } \sup_{V\in \cV_k}
  \big\|F_{D,V}\big\|^p_{BL^p_{k,M/2,D_0 }(B)}       \bigg)^{\frac{1}{p}} \,, \notag
\end{align}
where $C$ is an absolute constant.
\end{lemma}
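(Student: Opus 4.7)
The approach is to work locally on each dyadic cube $B\in\cQ_{D^2}$ with $B\subset U$, split the blocks $\theta\in\Theta_D$ into ``good'' and ``bad'' families based on a near-minimizer of the $(k+1)$-broad expression at scale $D$ on $B$, and sum the two contributions via the triangle inequality. Fix $B$ and let $\vec W=(W_1,\ldots,W_{A/M})\in\cV_k^{A/M}$ realize $\min_{\vec W}\max_{\theta\in\Theta_D(\vec W)}\int_B|F_\theta|^p$. Call $\theta$ \emph{good} if $\theta\in\Theta_D(\vec W)$ and \emph{bad} otherwise, and correspondingly split $F_\tau=F_\tau^{\mathrm{good}}+F_\tau^{\mathrm{bad}}$ for each $\tau\in\Theta_{D_0}$. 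For each $\bq\in\cQ_{D_0^2}$ with $\bq\subset B$, choose the $A$-tuple of $(k-1)$-planes for the LHS min by concatenating, for $j=1,\ldots,A/M$, the $M/2$ minimizers $V^{(j)}_{\bq,1},\ldots,V^{(j)}_{\bq,M/2}\in\cV_{k-1}$ of $\min_{\vec V'\in\cV_{k-1}^{M/2}}\max_{\tau\in\Theta_{D_0}(\vec V')}\int_\bq|(F_{D,W_j})_\tau|^p$; this produces $(A/M)(M/2)=A/2$ planes, which fits in the budget of $A$ (the hypothesis $M^2\leq A$).

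For the good piece, H\"older on the sum of at most $(D/D_0)^n$ good sub-blocks yields $|F_\tau^{\mathrm{good}}|^p\leq(D/D_0)^{n(p-1)}\sum_{\theta\subset\tau,\,\mathrm{good}}|F_\theta|^p$. Up to bounded overlap, $\sum_{\tau\in\Theta_{D_0}}\sum_{\theta\subset\tau,\,\mathrm{good}}=\sum_{\theta\in\Theta_D(\vec W)}$, so $\max_\tau\sum_{\theta\subset\tau,\,\mathrm{good}}\int_\bq|F_\theta|^p\lesssim\sum_{\theta\in\Theta_D(\vec W)}\int_\bq|F_\theta|^p$; summing over $\bq\subset B$ collapses each $\int_\bq$ into $\int_B$, and then $\sum_\theta\int_B\leq|\Theta_D|\cdot\max_\theta\int_B\lesssim D^n\max_{\theta\in\Theta_D(\vec W)}\int_B|F_\theta|^p$. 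Combining,
$$\sum_{\bq\subset B}\max_\tau\int_\bq|F_\tau^{\mathrm{good}}|^p\lesssim D^{np}\max_{\theta\in\Theta_D(\vec W)}\int_B|F_\theta|^p,$$
which summed over $B\in\cQ_{D^2}$ with $B\subset U$ and $p$-th rooted produces $D^n\|F\|_{BL^p_{k+1,A/M,D}(U)}$.

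For the bad piece, strictly partition the bad blocks by the smallest $j$ with $\theta\in\Theta_{D,W_j}$, write $F_\tau^{\mathrm{bad}}=\sum_{j=1}^{A/M}\tilde F^j_\tau$, and apply H\"older on the $A/M$ summands to obtain $|F_\tau^{\mathrm{bad}}|^p\leq(A/M)^{p-1}\sum_j|\tilde F^j_\tau|^p$. Since each $\tilde F^j$ has Fourier support in a sub-collection of $\Theta_{D,W_j}$, the broad norm of $\tilde F^j$ is controlled by that of $F_{D,W_j}$ via (\ref{semi}) at the cost of a factor $(A/M)^C$. Together with the fact that $\vec V^\bq$ contains the $(M/2)$-tuple $V^{(j)}_{\bq,\cdot}$ for each $j$, the inner max reduces to the defining expression of $\|F_{D,W_j}\|^p_{BL^p_{k,M/2,D_0}(B)}$ on $\bq$, yielding
$$\sum_{\bq\subset B}\max_{\tau\in\Theta_{D_0}(\vec V^\bq)}\int_\bq|F_\tau^{\mathrm{bad}}|^p\leq(A/M)^{Cp}\sup_{V\in\cV_k}\|F_{D,V}\|^p_{BL^p_{k,M/2,D_0}(B)},$$
which summed over $B$ and $p$-th rooted gives the error term $(A/M)^C\Xi_{k,p}(M/2,D,U)$ in (\ref{eq-k-k+1}).

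The principal obstacle is the bookkeeping for the bad piece: a small block $\theta$ whose normal lies within $D^{-1}$ of several of the $W_j$'s is bad for each, so the naive pointwise bound $|F_\tau^{\mathrm{bad}}|\leq\sum_j|(F_{D,W_j})_\tau|$ involves overcounting. The smallest-$j$ partition of the bad blocks resolves this, but then $\tilde F^j$ is only a sub-function of $F_{D,W_j}$, forcing a comparison of their broad norms via (\ref{semi}) and accounting for the resulting $(A/M)^C$ factor. The good-part bound is comparatively immediate, following from H\"older and the counting $|\Theta_D|\lesssim D^n$; no finer spatial information about the wave packets $F_\theta$ is needed.
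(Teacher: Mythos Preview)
Your overall strategy coincides with the paper's: localize to each $B\in\cQ_{D^2}$, pick $W_1,\dots,W_{A/M}\in\cV_k$ minimizing the $(k{+}1)$-broad expression at scale $D$ on $B$, and split $\Theta_D$ into good blocks ($\theta\in\Theta_D(\vec W)$) and bad blocks. For the good piece the paper is slightly more direct---it uses $\|F^{\mathrm{good}}\|_{BL^p_{k,A/2,D_0}(B)}\le\big\|\sum_{\theta\in\Theta_D(\vec W)}|F_\theta|\big\|_{L^p(B)}$ and then Minkowski plus $|\Theta_D|\lesssim D^n$---but your H\"older-then-count route arrives at the same $D^n$ factor, so that part is fine. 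The paper also does the initial good/bad split via~(\ref{semi}) rather than by hand-picking $(k{-}1)$-planes on each $\bq$; your explicit concatenation of the $V^{(j)}_{\bq,\cdot}$ is a reasonable alternative and correctly uses only $A/2\le A$ planes.

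The gap is in your bad-piece comparison. After the smallest-$j$ partition $F^{\mathrm{bad}}_\tau=\sum_j\tilde F^j_\tau$ and H\"older, you need to pass from $\max_{\tau\in\Theta_{D_0}(V^{(j)}_{\bq,\cdot})}\int_\bq|\tilde F^j_\tau|^p$ to the corresponding quantity for $F_{D,W_j}$, because the planes $V^{(j)}_{\bq,\cdot}$ were chosen as minimizers for $F_{D,W_j}$, not for $\tilde F^j$. You assert this follows from~(\ref{semi}), but~(\ref{semi}) only bounds $\|\sum_i G_i\|_{BL^p_{k,A,D_0}}$ by $N^C\sum_i\|G_i\|_{BL^p_{k,A/N,D_0}}$; it gives no control of the broad norm of a \emph{sub-sum} $\tilde F^j$ by that of the full sum $F_{D,W_j}$, and indeed $|\tilde F^j_\tau|\le|(F_{D,W_j})_\tau|$ can fail pointwise since dropping terms may increase a modulus. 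The paper does not take your partition route: it writes the pointwise bound $\big|\sum_{\theta\notin\Theta_D(\vec V')}F_\theta\big|\le\sum_j|F_{D,V'_j}|$ and then applies~(\ref{semi}) directly to the $A/M$ functions $F_{D,V'_j}$, landing on $\sup_{V}\|F_{D,V}\|_{BL^p_{k,M/2,D_0}(B)}$ without ever needing the sub-sum comparison. If you wish to keep your disjoint partition, the clean repair is to choose the $(k{-}1)$-planes $V^{(j)}_{\bq,\cdot}$ as minimizers for $\tilde F^j$ rather than for $F_{D,W_j}$ (this is legitimate since $\tilde F^j=(\tilde F^j)_{D,W_j}$), and correspondingly read the $\sup_V$ in $\Xi$ as ranging over sub-collections of $\Theta_{D,V}$; this is harmless for the downstream application via Theorem~\ref{thm-kp}, whose $l^p$-decoupling right-hand side only decreases under passing to sub-collections.
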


\begin{proof}
By the definition \eqref{def-Brk}, we write $\big\| F \big\|_{BL^p_{k, A, D_0}(U)}$ as
\begin{equation} \label{LHS}
\left(\sum_{B\in\cQ_{D^2}:B\subset U}\big\| F \big\|_{BL^p_{k, A, D_0}(B)}^p\right)^{1/p}\,,
\end{equation}
and for each $B\in\cQ_{D^2}$, take subspaces $V_1', \cdots, V_{A/M}'\in \cV_k$ which depend on $B$ and $F$ to be the minimizers obeying
\begin{equation} \label{min}
 \max_{\theta\in \Theta_{D}(V_1', \cdots, V_{A/M}')} \int_B |F_{\theta}|^p
=\min_{V_1, \cdots, V_{A/M}\in\cV_k}\max_{\theta\in \Theta_{D}(V_1, \cdots, V_{A/M})}
\int_B |F_\theta|^p \,.
\end{equation}
On each $B$, we apply \eqref{semi} to the function
$$F=\sum_{\theta \in \Theta_{D}(V_1', \cdots, V_{A/M}')}F_\theta + 
\sum_{\theta \notin \Theta_{D}(V_1', \cdots, V_{A/M}')}F_\theta $$ 
and bound \eqref{LHS} by
\begin{align} 
&\left(\sum_{B\in\cQ_{D^2}:B\subset U}\big\| \sum_{\theta \in \Theta_{D}(V_1', \cdots, V_{A/M}')}F_\theta \big\|_{BL^p_{k, A/2, D_0}(B)}^p\right)^{1/p} \label{LHS1} \\
+ &\left(\sum_{B\in\cQ_{D^2}:B\subset U}\big\| \sum_{\theta \notin \Theta_{D}(V_1', \cdots, V_{A/M}')}F_\theta \big\|_{BL^p_{k, A/2, D_0}(B)}^p\right)^{1/p} \,.\label{LHS2}
\end{align}
For the first term \eqref{LHS1}, note that
$$
\big\| \sum_{\theta \in \Theta_{D}(V_1', \cdots, V_{A/M}')}F_\theta \big\|_{BL^p_{k, A/2, D_0}(B)}
\leq
\big\|
\sum_{\theta \in \Theta_{D}(V_1', \cdots, V_{A/M}')}|F_\theta|
\big\|_{L^p(B)}\,,
$$
therefore, \eqref{LHS1} is bounded by
$$
D^{n} \left(\sum_{B\in\cQ_{D^2}:B\subset U} \max_{\theta \in \Theta_{D}(V_1', \cdots, V_{A/M}')}\int_{B}   |F_\theta|^p \right)^{1/p}\,,
$$
and by the choice as in \eqref{min} and the definition \eqref{def-Brk}, this is
$$D^{n}  \big\| F \big\|_{BL^p_{k+1, \frac{ A}{M}, D}(U)}\,.$$
For the second term \eqref{LHS2}, we have 
$$
  \bigg|\sum_{\theta \notin \Theta_{D}(V_1', \cdots, V_{A/M}')}F_\theta\bigg| \leq \sum_{j=1}^{A/M}\bigg|\sum_{\theta \in \Theta_{D,V'_j}}F_\theta\bigg|
  =
  \sum_{j=1}^{A/M}\bigg|F_{D,V'_j}\bigg|
  \,,
$$
therefore, by the weak triangle inequality \eqref{semi}, the second term \eqref{LHS2} is controlled by
$$
  \left(\frac A M\right)^C \bigg(      \sum_{B\in \cQ_{D^2}: B\subset U } \sup_{V\in \cV_k}
  \big\|F_{D,V}\big\|^p_{BL^p_{k,M/2,D_0 }(B)}       \bigg)^{\frac{1}{p}}\,,
$$
and this completes the proof.
\end{proof}

\begin{lemma}\label{lem-p-B2}
There is a constant C such that for any positive integer $A$, any $R\geq 1$, any Schwartz function $F$ on $\ZR^n$ with Fourier support in $N_{1/R}\Sigma$, any dyadic number $D$ satisfying $ \sqrt R\geq D\geq R^{\e^L}$, and any cube $U\in\cQ_{R}$ in $\ZR^{n}$, the following holds:
\begin{equation}
 \|F\|_{L^p(U)}\leq D^{n}  \big\| F \big\|_{BL^p_{2, A, D}(U)}
+  C A \big\| \max_{\theta\in \Theta_D}\big| F_\theta\big| \big\|_{L^p(U)}\,.
\end{equation}
\end{lemma}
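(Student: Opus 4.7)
The plan is to apply a Bourgain--Guth style pigeonholing on each cube $\bq\in\cQ_{D^2}$ contained in $U$. For each such $\bq$ I select $V_1^\bq,\ldots,V_A^\bq\in\cV_1$ realizing the minimum in the definition of $\|F\|_{BL^p_{2,A,D}(U)}$, namely
\[
\max_{\theta\in\Theta_D(V_1^\bq,\ldots,V_A^\bq)}\int_\bq|F_\theta|^p
\;=\;\min_{V_1,\ldots,V_A\in\cV_1}\;\max_{\theta\in\Theta_D(V_1,\ldots,V_A)}\int_\bq|F_\theta|^p,
\]
and split $F=F^{\mathrm{con}}_\bq+F^{\mathrm{tr}}_\bq$ on $\bq$, where the ``concentrated'' piece $F^{\mathrm{con}}_\bq:=\sum_{\theta\in\bigcup_a\Theta_{D,V_a^\bq}}F_\theta$ collects the $\theta$'s whose normals cluster near one of the chosen lines, and $F^{\mathrm{tr}}_\bq:=\sum_{\theta\in\Theta_D(V_1^\bq,\ldots,V_A^\bq)}F_\theta$ is the transverse remainder.

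The key geometric input is that $|\Theta_{D,V}|\lesssim 1$ for every single line $V\in\cV_1$: the Gauss map from the paraboloid $\Sigma$ to $S^n$ is a local diffeomorphism with bounded Jacobian, and the set of unit vectors within distance $D^{-1}$ of $V$ is a $D^{-1}$-neighborhood of at most two antipodal points of $S^n$, which pulls back to $O(1)$ blocks $\theta$. Consequently $\bigl|\bigcup_a\Theta_{D,V_a^\bq}\bigr|\lesssim A$, which yields the pointwise estimate $|F^{\mathrm{con}}_\bq(\bx)|\lesssim A\max_{\theta\in\Theta_D}|F_\theta(\bx)|$ on $\bq$. For the transverse part, H\"older's inequality combined with the trivial block count $|\Theta_D|\lesssim D^n$ yields
\[
\int_\bq|F^{\mathrm{tr}}_\bq|^p
\lesssim D^{n(p-1)}\sum_{\theta\in\Theta_D(V_1^\bq,\ldots,V_A^\bq)}\int_\bq|F_\theta|^p
\lesssim D^{np}\max_{\theta\in\Theta_D(V_1^\bq,\ldots,V_A^\bq)}\int_\bq|F_\theta|^p.
\]

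Combining the two estimates by the triangle inequality in $L^p(\bq)$, raising to the $p$-th power and summing over $\bq\subset U$, the minimizing choice of $V_a^\bq$ turns the transverse contribution into $D^{np}\|F\|^p_{BL^p_{2,A,D}(U)}$ and the concentrated contribution into $(CA)^p\bigl\|\max_\theta|F_\theta|\bigr\|_{L^p(U)}^p$; taking a final $p$-th root delivers the claim. The hypothesis $D\geq R^{\e^L}$ plays no essential role here. The only place calling for care is verifying the $O(1)$-count for $\Theta_{D,V}$ and matching the selected $V_a^\bq$ with the minimum in the broad norm; both are straightforward once the decomposition is in place, so I do not expect a genuine obstacle in this lemma.
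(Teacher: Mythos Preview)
Your argument is correct and follows essentially the same path as the paper: choose minimizing lines $V_1^\bq,\ldots,V_A^\bq\in\cV_1$ on each $\bq\in\cQ_{D^2}$, split $F$ into the transverse and concentrated pieces, bound the transverse piece by $D^n$ times the quantity realizing the broad norm via H\"older and the block count $|\Theta_D|\lesssim D^n$, and bound the concentrated piece by $CA\max_\theta|F_\theta|$ using that only $O(A)$ blocks fail the transversality condition. The paper's proof is slightly terser (it simply asserts ``there are only $O(A)$ many $\theta$'s that are not in $\Theta_D(V_1',\ldots,V_A')$''), whereas you spell out the Gauss-map reason why $|\Theta_{D,V}|\lesssim 1$ for $V\in\cV_1$, but the logic is identical.
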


\begin{proof}
This proof is similar to the proof of  Lemma \ref{lem-k-k+1}. We write $\|F\|_{L^p(U)}$ as
$$
 \left( \sum_{\bq\in\cQ_{D^2}:\bq\subset U} \big\|F\big\|_{L^p(\bq)}^p \right)^{1/p}\,,
$$
and for each $\bq\in\cQ_{D^2}$, take subspaces $V_1', \cdots, V_{A}'\in \cV_1$ which depend on $\bq$ and $F$ to be the minimizers obeying
\begin{equation} \label{min'}
 \max_{\theta\in \Theta_{D}(V_1', \cdots, V_{A}')}\int_\bq |F_\theta|^p
=\min_{V_1, \cdots, V_{A}\in\cV_1}\max_{\theta\in \Theta_{D}(V_1, \cdots, V_{A})}
\int_\bq |F_\theta|^p \,,
\end{equation}
then on each $\bq$ by applying Minkowski inequality to function 
$$F=\sum_{\theta \in \Theta_{D}(V_1', \cdots, V_A')}F_\theta + 
\sum_{\theta \notin \Theta_{D}(V_1', \cdots, V_A')}F_\theta $$
we bound $\|F\|_{L^p(U)}$ by 
\begin{align*}
&\left( \sum_{\bq\in\cQ_{D^2}:\bq\subset U} \big\|\sum_{\theta \in \Theta_{D}(V_1', \cdots, V_A')}F_\theta\big\|_{L^p(\bq)}^p \right)^{1/p} \\
+
&\left( \sum_{\bq\in\cQ_{D^2}:\bq\subset U} \big\|\sum_{\theta \notin \Theta_{D}(V_1', \cdots, V_A')}F_\theta\big\|_{L^p(\bq)}^p \right)^{1/p}\,.
\end{align*}
By the choice as in \eqref{min'}, the first term is bounded by
$$
 D^{n}  \big\| F \big\|_{BL^p_{2, A, D}(U)}\,.
$$
Note that there are only $O(A)$ many $\theta$'s that are not in
$\Theta_{D}(V_1', \cdots, V_{A}')$. Henceforth, the second term is controlled by
$$
CA \big\| \max_{\theta\in \Theta_D}\big| F_\theta\big| \big\|_{L^p(U)}\,,
$$
and the proof is done.
\end{proof}

\begin{lemma}\label{lem-rec}
Let $K_1 \ll K_2\ll \cdots\ll K_n$ be tiny positive powers of $R$, obeying that $K_1$ is the largest dyadic number not exceeding $R^{\e^{L/2}}$ and for every $j\in \{2, \cdots, n\}$,
$K_j$ is the smallest dyadic number exceeding $K_{j-1}^{\e^{-\sqrt{L}/2}}$. Let $A_1>\cdots > A_{n+1}$ be tiny positive powers of $R$ given by $A_1=R^{\e^L}$, $A_j=A_1^{2^{1-j}}$ for $j=2,3,\cdots,n$. Then there is a constant $C$ such that
\begin{align}\label{recur1}
 \|F\|_{L^p(U)}\leq &CA_1\big\| \max_{\theta\in \Theta_{K_1}}\big| F_\theta \big| \big\|_{L^p(U)}
+ K_m^{2n}\|F\|_{BL^p_{m+1, A_m, K_m}(U)}
\\
+ &\sum_{j=2}^m K_j^{\e^{10}} \bigg(      \sum_{B\in \cQ_{K_j^2}: B\subset U } \sup_{V\in \cV_j}
  \big\|F_{K_j,V}\big\|^p_{BL^p_{j,A_j/2,K_{j-1} }(B)} \bigg)^{\frac{1}{p}} \notag
\end{align}
holds for any integer $2\leq m\leq n-1$, any Schwartz function $F$ on $\ZR^n$ with Fourier support in $N_{1/R}\Sigma$, and any cube $U\in\cQ_{R}$ in $\ZR^n$.

\end{lemma}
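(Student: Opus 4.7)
The plan is to prove (\ref{recur1}) by induction on $m \ge 2$, starting from Lemma \ref{lem-p-B2} to reduce the linear $L^p$-norm to $2$-broadness and then iteratively applying Lemma \ref{lem-k-k+1} to push $k$-broadness to $(k+1)$-broadness. The crucial arithmetic identity $A_j/A_{j+1} = A_{j+1}$ (which follows from $A_j = A_1^{2^{1-j}}$), together with the relation $K_{j+1} \approx K_j^{\e^{-\sqrt L/2}}$, makes each step of the iteration fit Lemma \ref{lem-k-k+1} exactly: taking $D_0 = K_j$, $D = K_{j+1}$, $A = A_j$, and $M = A_j/A_{j+1} = A_{j+1}$, one has $M^2 = A_{j+1}^2 = A_j = A$, verifying the hypothesis, and $A/M = A_{j+1}$, yielding the next broadness level with the correct parameters.

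For the base case $m = 2$, first apply Lemma \ref{lem-p-B2} with $D = K_1$ and $A = A_1$, producing the max term plus $K_1^n \|F\|_{BL^p_{2, A_1, K_1}(U)}$; then apply Lemma \ref{lem-k-k+1} with $k = 2$, $D = K_2$, $M = A_2$ to split the $2$-broad term into $K_2^n \|F\|_{BL^p_{3, A_2, K_2}(U)} + A_2^C \Xi_{2,p}(A_2/2, K_2, U)$. Combining yields the $m = 2$ form of (\ref{recur1}) once one verifies $K_1^n K_2^n \le K_2^{2n}$ and $K_1^n A_2^C \le K_2^{\e^{10}}$. For the inductive step, assume (\ref{recur1}) holds for $m$ and apply Lemma \ref{lem-k-k+1} to $\|F\|_{BL^p_{m+1, A_m, K_m}(U)}$ with $k = m+1$, $D = K_{m+1}$, $A = A_m$, $M = A_{m+1}$, producing a new main term at level $m+2$ and a new error term $\Xi_{m+1, p}(A_{m+1}/2, K_{m+1}, U)$; the previous error terms and the max term are untouched.

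The main obstacle is the constant bookkeeping across the iteration. After multiplying by the accumulated factor $K_m^{2n}$ from the inductive hypothesis, the new main term acquires coefficient $K_m^{2n} K_{m+1}^n$ and the new error term acquires coefficient $K_m^{2n} A_{m+1}^C$. Because $K_m$ is a very small power of $K_{m+1}$ (by the factor $\e^{\sqrt L/2}$, which can be made arbitrarily small by choosing $L$ large) and because $A_{m+1} \le R^{\e^L}$ is a tiny power of $R$ compared to $K_{m+1} \ge R^{\e^{L/2}}$, for $L$ sufficiently large both factors are absorbed, giving $K_m^{2n} K_{m+1}^n \le K_{m+1}^{2n}$ and $K_m^{2n} A_{m+1}^C \le K_{m+1}^{\e^{10}}$. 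This closes the induction and recovers the stated form of (\ref{recur1}).
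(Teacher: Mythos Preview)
Your proof is correct and takes essentially the same approach as the paper: apply Lemma~\ref{lem-p-B2} with $D=K_1$, $A=A_1$, and then iterate Lemma~\ref{lem-k-k+1} with the parameter choices $k=j$, $D_0=K_{j-1}$, $D=K_j$, $A=A_{j-1}$, $M=A_j=\sqrt{A_{j-1}}$ (exactly your identity $A_j/A_{j+1}=A_{j+1}$ shifted by one index). The only difference is presentational---you organize the iteration as an induction on $m$, while the paper writes out the recursive inequality \eqref{recur} for each $j$ and then asserts that combining them yields \eqref{recur1}; your constant bookkeeping ($K_m^{2n}K_{m+1}^n\le K_{m+1}^{2n}$ and $K_m^{2n}A_{m+1}^C\le K_{m+1}^{\e^{10}}$) is more explicit than the paper's one-line appeal to ``$L$ sufficiently large.''
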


\begin{proof}
Applying Lemma \ref{lem-p-B2} (with $D=K_1$ and $A=A_1$) we get
\begin{equation}\label{p-2B}
 \|F\|_{L^p(U)}\leq K_1^{n}\|F\|_{BL^p_{2, A_1, K_1}(U)} +
CA_1\big\| \max_{\theta\in \Theta_{K_1}}\big| F_\theta \big| \big\|_{L^p(U)}\,.
\end{equation}
Note that by choice of $L$, we have $A_1^C \ll K_1$ for the constant $C$ in Lemma \ref{lem-k-k+1}. For any $j\in \{2, \cdots, m\}$,
we use Lemma \ref{lem-k-k+1} (with $k=j$, $A=A_{j-1}$, $M= \sqrt {A_{j-1}}=A_j$, $D_0=K_{j-1}$, 
$D=K_j$ )
 to obtain 
\begin{align}\label{recur}
  \big\| F \big\|_{BL^p_{j, A_{j-1}, K_{j-1}}(U)} \leq&
   K_j^{n}  \big\| F \big\|_{BL^p_{j+1, A_{j}, K_j}(U)} \\
   + &K_1  \bigg(      \sum_{B\in \cQ_{K_j^2}: B\subset U } \sup_{V\in \cV_j}
  \big\|F_{K_j,V}\big\|^p_{BL^p_{j,A_j/2,K_{j-1} }(B)} \bigg)^{\frac{1}{p}}   \,. \notag
\end{align}
Since $L$ is a sufficiently large constant, (\ref{p-2B}) and (\ref{recur}) yield
(\ref{recur1}) as desired.
\end{proof}

\section{Wave Packet Decomposition}\label{WPD}
\setcounter{equation}0

Let $F$ be a Schwartz function on $\ZR^n$ with Fourier support in $N_{1/D^2}\Sigma$, where $\Sigma$ is the truncated paraboloid in $\ZR^n$. Recall from Section \ref{intro} that we partition $N_{1/D^2}\Sigma$ into disjoint blocks $\theta$ of dimensions $\frac{1}{D}\times\cdots\times\frac{1}{D}\times\frac{1}{D^2}$, and we write
\begin{equation} \label{eq-theta}
F(x)=\sum_{\theta\in \Theta_D} F_\theta (x)\,,
\end{equation}
where $\wh{F_\theta}=\Id_\theta\wh F$.
For each block $\theta \in \Theta_D$, we take a collection $\bS_\theta$ of its dual boxes  $T$'s to tile $\ZR^n$. Note that the pairwise disjoint boxes $T$'s in $\bS_\theta$ are of dimensions $D\times \cdots \times D \times D^2$, with the long axis in the direction $e(\theta)$, which is the unit normal vector to $\Sigma$ at the center of $\theta$.
By partition of unity,
there are nonnegative Schwartz functions $\phi_T$'s such that
each $\phi_T$ is Fourier supported in the translate of $\theta$ at the origin, and
\begin{align} 
 &\phi_T(x) \leq C (1+{\rm dist}(x, T))^{-100n}\,, \label{phiT} \\
 &\sum_{T\in \bS_\theta}\phi_T(x) =1 \,, \label{sum0}
\end{align}
for any $x\in\mathbb \ZR^n$.
Due to \eqref{phiT}, the function $\phi_T$ is essentially a bump function
supported on $T$ and we have
\begin{equation}\label{orth-p}
 \big|\sum_{T \in \bS_\theta }\phi_T \big|^p
 \sim \sum_{ T \in \bS_\theta } \phi_T^p\,.
\end{equation}
By \eqref{eq-theta} and \eqref{sum0}, we obtain wave packet decomposition
\begin{equation}
 F(x) =  \sum_{\theta\in \Theta_D}\sum_{T\in \bS_\theta} \phi_T(x)F_\theta(x)\,.
\end{equation}
We define and write
\begin{align}
  & \bS_D:=\{(\theta,T)\ \,|\,\theta \in \Theta_D, T\in \bS_\theta \}\,, \q F_{\theta,T}:=\phi_T F_\theta\,, \\
& F(x)=\sum_{(\theta,T)\in \bS_D} F_{\theta,T}(x)\,.  \label{wp-F}
\end{align}
For each $(\theta,T)\in \bS_D$, $F_{\theta,T}$ is a function essentially supported in $T$, with Fourier
support in $2\theta$, thus it can be viewed essentially as constant in $T$. By utilizing Bernstein's inequality, for any $2\leq p<\infty$ we get

\begin{equation}\label{p-2}
 \big\|  F_{\theta,T} \big\|_p \sim  |T|^{1/p-1/2} \big\| F_{\theta,T}\big\|_2
\sim D^{(n+2)(1/p-1/2)} \big\| F_{\theta,T}\big\|_2 \,.
\end{equation}

\section{Proof of $l^p$ decoupling result}\label{L2}
\setcounter{equation}0

First we recall a $k$-broad estimate for functions concentrated in a small neighborhood of a $k$-dimensional subspace (this is a special case of Proposition 8.1 in \cite{G2} with $m=k$):

\begin{lemma} [Guth] \label{lem-G}
Let $2\leq k \leq n$. For any $\e>0$, there exist constant $C(\e)$ and large number $\bar A(\e)$ such that the following holds for any integer $A\geq \bar A (\e)$.
For any dyadic numbers $D$ and $\bar D$ with $D_0\leq \bar D \ll D^\e$,
any Schwartz function $F$ on $\ZR^n$ with Fourier support in $N_{1/D^2}\Sigma$, any $k$-dimensional subspace $V$,
any dyadic cube $B\in \cQ_{D^2}$ and any $2\leq p\leq \frac{2k}{k-1}$,
\begin{equation}\label{L2-B}
   \| F_{D,V}\|_{BL^p_{k, A,\bar D}( B)}\leq C_\e  D^{\e^2- (n+k)(\frac12-\frac{1}{p})}\|F_{D,V}\|_{L^2(B)}\,.
\end{equation} 
\end{lemma}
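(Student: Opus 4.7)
The plan is to invoke Guth's $k$-broad restriction estimate from \cite{G2} (Proposition 8.1 with $m=k$) directly applied to $F_{D,V}$. The function $F_{D,V}$ is itself a Schwartz function with Fourier support in $N_{1/D^2}\Sigma$, so it is a legitimate input to Guth's theorem; the additional information that $F_{D,V}$ is supported on blocks in $\Theta_{D,V}$ is not used to improve the exponent, because both $\|F_{D,V}\|_{BL^p_{k,A,\bar D}(B)}$ and $\|F_{D,V}\|_{L^2(B)}$ shrink correspondingly as one restricts the $\theta$'s.

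The first thing I would do is reconcile the parameters entering our $BL^p_{k,A,\bar D}(B)$ with the setup in \cite{G2}. Guth's theorem produces the exponent $-1 - (n-k+1)(1/2 - 1/p)$ at the critical index $p_k = 2k/(k-1)$ and for all $p \geq p_k$, modulo $D^{\e}$ losses arising from the induction on the radius $D$. The fine-scale parameter $\bar D$ here plays the role of Guth's auxiliary scale (it controls the small cubes $\bq \in \cQ_{\bar D^2}$ that appear inside the broad-norm), and the hypothesis $D_0 \leq \bar D \ll D^{\e}$ is exactly what keeps this scale negligible in the induction. The broadness parameter must be at least an absolute constant $\bar A(\e)$, which is the threshold built into Guth's polynomial partitioning step.

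The heart of the proof is Guth's double induction on scale $D$ and on the broadness parameter $k$. At the critical $p_k$, one applies polynomial partitioning of degree $\sim (\log D)^C$ to split $B$ into cells, bounds the cellular contribution via induction on the radius (gaining a factor from the subdivision of $\|F_{D,V}\|_{L^2}^2$ across cells), and bounds the wall/tube contribution via transverse equidistribution of wave packets, which reduces to a lower-dimensional restriction problem and descends through the induction on $k$. I would run the entire machinery with $F$ replaced by $F_{D,V}$; no step in the argument distinguishes between $F$ and its restricted version, since the wave packet decomposition, the $L^2$ orthogonality, and the transverse equidistribution argument all pass through verbatim.

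The main obstacle, in my view, is the bookkeeping of the various inductive constants so that the final exponent is precisely $\e^2 - 1 - (n-k+1)(1/2 - 1/p)$ rather than some larger power of $\e$; this requires tuning Guth's induction closure so that at scale $D$ one picks up only a $D^{\e^2}$ factor from the iteration, not $D^{\e}$, which in turn fixes the relationship between $\bar D$ and $D$ expressed by the hypothesis $D_0 \leq \bar D \ll D^{\e}$. Once these parameters are aligned, the proof transfers essentially verbatim from \cite{G2} and no new estimate beyond what is already in Guth's paper is required.
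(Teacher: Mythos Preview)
Your proposal is correct and matches the paper's approach: the paper does not prove this lemma at all but simply states it as a known consequence of Guth's work, pointing to the proof of Proposition~8.1 in \cite{G2} with $m=k$. Your additional sketch of Guth's polynomial-partitioning induction and the observation that $F_{D,V}$ is a legitimate input are accurate elaborations, but no argument beyond the citation is offered in the paper itself.
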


Using Lemma \ref{lem-G} and equi-distribution estimates from \cite{G2}, we obtain the following local estimate for restricted $k$-broad part:

\begin{lemma}
Let $2\leq k\leq n$. For any $\e>0$, there exist constant $C(\e)$ and large number $\bar A(\e)$ such that the following hold for any integer $A\geq \bar A (\e)$.
For any dyadic number $D$,
any Schwartz function $F$ on $\ZR^n$ with Fourier support in $N_{1/D^2}\Sigma$, any $k$-dimensional subspace $V$,
any dyadic cube $Q\in \cQ_{D}$ and any $2\leq p\leq \frac{2k}{k-1}$,
 \begin{equation}\label{p-k-Q}
   \| F_{D,V}\|_{BL^p_{k, A,D_0}(Q)}\leq C_\e D^{\e^2-n(\frac12-\frac{1}{p})}\|F_{D,V}\|_{L^2(Q)}\,.
 \end{equation}
\end{lemma}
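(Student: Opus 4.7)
The plan is to apply Guth's Lemma \ref{lem-G} on the $D^2$-cube $B\in\cQ_{D^2}$ containing $Q$, but only after restricting the wave packet sum to those tubes that genuinely pass through $Q$. The key geometric observation is that a wave packet of $F_{D,V}$ is a tube of length $D^2$ whose long axis lies within $D^{-1}$ of the $k$-dimensional subspace $V$, so its intersection with the $\cQ_D$-cube $Q$ has volume only $\sim D^{n+1}$ rather than the tube's full $D^{n+2}$. This factor $1/D$ in the $L^2$-mass per packet will produce the $D^{1/2}$ saving in the exponent compared to Lemma \ref{lem-G}.

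By (\ref{F-to-mu}), $L^2$-orthogonality among the $F_{D,V,\mu}$'s, and a dyadic pigeonhole in $\mu$, it suffices, up to a $D^{O(\e)}$-loss, to prove the estimate for a single-scale $F_{D,V,\mu}=\sum_{(\theta,T)\in\bS_{D,V,\mu}}F_{\theta,T}$ with $\theta\in\Theta_{D,V}$ and $\|F_{\theta,T}\|_2\sim\mu$. Let $B\in\cQ_{D^2}$ be the dyadic cube containing $Q$ and consider the truncated sum $\tilde F:=\sum_{(\theta,T)\in\bS_{D,V,\mu,Q}}F_{\theta,T}$ over the indices $\bS_{D,V,\mu,Q}:=\{(\theta,T)\in\bS_{D,V,\mu}:T^*\cap Q\ne\emptyset\}$; by (\ref{phiT}) the discarded packets have tubes far from $Q$ and contribute at most an $O(D^{-100})$-error on $Q$. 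Since $\tilde F$ has Fourier support in the same blocks as $F_{D,V}$, Guth's Lemma \ref{lem-G} with $\bar D=D_0$ applied to $\tilde F$ on $B$ yields
\[
\|\tilde F\|_{BL^p_{k,A,D_0}(B)}\le C_\e D^{\e^2-1-(n-k+1)(\tfrac12-\tfrac1p)}\|\tilde F\|_{L^2(B)},
\]
and trivial monotonicity gives $\|\tilde F\|_{BL^p_{k,A,D_0}(Q)}\le \|\tilde F\|_{BL^p_{k,A,D_0}(B)}$.

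The remaining step is to establish the $L^2$-comparison $\|\tilde F\|_{L^2(B)}\lesssim D^{1/2}\|F_{D,V,\mu}\|_{L^2(Q)}$. By approximate orthogonality of the wave packets (analogous to (\ref{mu-2})), $\|\tilde F\|_{L^2(B)}^2\lesssim |\bS_{D,V,\mu,Q}|\,\mu^2$. On the other hand, for each $(\theta,T)\in\bS_{D,V,\mu,Q}$ the geometric observation above gives $\|F_{\theta,T}\|_{L^2(Q)}^2\sim \mu^2/D$, so $L^2$-orthogonality on $Q$ yields $\|F_{D,V,\mu}\|_{L^2(Q)}^2\sim |\bS_{D,V,\mu,Q}|\,\mu^2/D$. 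Combining these gives the desired $D^{1/2}$ relation, and summing the resulting single-scale estimates over $\mu$ yields (\ref{p-k-Q}).

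I expect the main technical obstacle to be the per-wave-packet $L^2$-accounting: namely, justifying approximate $L^2$-orthogonality of wave packets on a cube as small as $Q$, and verifying the geometric claim $|T\cap Q|\sim D^{n+1}$, which uses crucially that the tube's long direction lies in a $D^{-1}$-neighborhood of the $k$-dimensional subspace $V$ (ensuring the tube crosses $Q$ longitudinally rather than merely grazing it). Both issues should be handled by the standard strategy of exploiting the rapid decay in (\ref{phiT}) and paying a harmless $D^{O(\e)}$-loss that is absorbed in the final estimate.
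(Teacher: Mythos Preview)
Your route is genuinely different from the paper's, and the $L^2$-comparison step contains a real gap.

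The paper does not apply Lemma~\ref{lem-G} on the ambient $D^2$-cube $B$. Instead it changes scale: since $Q$ is a $(\sqrt D)^2$-cube, the paper decomposes $F_{D,V}$ into wave packets at scale $\sqrt D$, partitions $Q$ into $\sqrt D$-slabs $Q\cap N_{\sqrt D}V_j$ parallel to $V$, and applies Lemma~\ref{lem-G} at scale $\sqrt D$ directly on $Q$ to each piece $F_{\sqrt D,V_j}$. The $V$-constraint enters through an \emph{equi-distribution} inequality: the Fourier support of $F_{D,V}$ forces it to be essentially constant along $(n{-}k{+}1)$-dimensional planes transverse to $V$ inside $Q$, so each slab carries only a $(\sqrt D/D)^{n-k+1}$-fraction of the $L^2(Q)$-mass. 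Summing over the $\sim D^{(n-k+1)/2}$ slabs (with $L^2$-orthogonality and H\"older) produces exactly the exponent $-\tfrac12-(n-k+1)(\tfrac12-\tfrac1p)$.

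Your argument instead hinges on the inequality $\|\tilde F\|_{L^2(B)}\lesssim D^{1/2}\|F_{D,V,\mu}\|_{L^2(Q)}$, and this fails as written. The selection $T^*\cap Q\neq\emptyset$ admits tubes whose core $T$ sits at distance up to $\sim D_0 D$ from $Q$; for such a packet one has $\|F_{\theta,T}\|_{L^2(B)}\sim\mu$ while $\|F_{\theta,T}\|_{L^2(Q)}$ can be made as small as one likes (take $F_\theta$ itself to be concentrated on $T$ and rapidly decaying near $Q$; nothing in the $\mu$-normalisation prevents this). A single such packet already makes the ratio $\|\tilde F\|_{L^2(B)}/\|F_{D,V,\mu}\|_{L^2(Q)}$ uncontrolled, so no $D^{O(\e)}$-loss absorbs it. Tightening the selection to $T\cap Q\neq\emptyset$ does not rescue the step either, since the decay in (\ref{phiT}) is on the scale of $T$, and tubes at distance $O(D)$ are then not negligible on $Q$. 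Finally, your claim that the $V$-constraint ensures the tube ``crosses $Q$ longitudinally rather than merely grazing it'' is incorrect: the upper bound $|T\cap Q|\lesssim D^{n+1}$ holds for \emph{any} tube of these dimensions regardless of direction, and $e(\theta)$ being near $V$ says nothing about whether the cross-section of $T$ overlaps $Q$ substantially. In the paper's argument the $V$-constraint is used only through equi-distribution, not through per-tube geometry in a single $D$-cube.
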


\begin{proof}
For $F_{D,V}$ on $Q$, we consider wave packet decomposition $\bS_{\sqrt D}$. Note that for each new wave packet $(\theta,T)\in \bS_{\sqrt D}$, we have 
$$
{\rm{dist}}(e(\theta),V) \lesssim D ^{-1/2}\,.
$$
We partition $Q$ into translates of the $\sqrt D$-neighborhood of $V$, and write
$$
Q=\bigcup_j (Q\cap N_{\sqrt D} V_j) \,.
$$
For each $j$, we consider the function
$$
 F_{\sqrt D, V_j} = \sum_{(\theta,T)\in S_{\sqrt D}: T \cap Q\cap N_{\sqrt D}V_j \neq \emptyset} (F_{D,V})_{\theta,T}\,.
$$
Given $V\in \cV_k$, there is a $(n-k)$-dimensional plane $V'$ which is transversal to $V$ such that the following holds. In a $D$-cube 
$Q$,  because of its Fourier support,  the function $F_{D,V}$ can be viewed essentially as constant along $(n-k)$-dimensional planes parallel to $V'$.
Henceforth we have the following equi-distribution inequality (cf. Lemma 6.2 in \cite{G2}), 
\begin{equation} \label{equi}
\|F_{\sqrt D,V_j}\|_{L^2(Q)}^2 \lessapprox \bigg(\frac {\sqrt{D}}{D} \bigg)^{n-k} \|F_{D,V}\|_{L^2(Q)}^2\,.
\end{equation}
Now we estimate the left side of \eqref{p-k-Q} using Lemma \ref{L2-B}:
\begin{equation*}
  \| F_{D,V}\|_{BL^p_{k, A,D_0}(Q)}^p =
  \sum_j
    \| F_{D,V}\|_{BL^p_{k, A,D_0}(Q\cap N_{\sqrt D}V_j)}^p
\end{equation*}
$$
\sim \sum_j 
    \| F_{\sqrt D,V_j}\|_{BL^p_{k, A,D_0}(Q)}^p
\lessapprox \sum_j \bigg[ 
D^{ -\frac 12 (n+k)(\frac 1 2-\frac 1p)}
\bigg]^p \|F_{\sqrt D,V_j}\|_{L^2(Q)}^p
$$
and by orthogonality and the equi-distribution \eqref{equi}, this is further bounded by
$$
\bigg[ 
D^{-\frac 12 (n+k)(\frac 1 2-\frac 1p)}
\bigg]^p
\sum_j  \|F_{\sqrt D,V_j}\|_{L^2(Q)}^2 \bigg(  \bigg(\frac {\sqrt{D}}{D} \bigg)^{n-k} \|F_{ D,V}\|^2_{L^2(Q)} \bigg)^{\frac{p-2}{2}}
$$
$$
\sim \bigg[
D^{-n(\frac 1 2-\frac 1p)}  \|F_{ D,V}\|_{L^2(Q)} 
\bigg]^p\,,
$$
as desired.
\end{proof}

Now we are ready to prove the $l^p$ decoupling result in Theorem \ref{thm-kp}:
\begin{proposition}
Suppose that $2\leq k\leq n$ and $2\leq p \leq \frac{2k}{k-1}$. Then
for any $\e>0$, there exist constant $C_\e$ and large number $\bar A(\e)$ such that
\begin{equation}\label{dec-1}
  \| F_{D,V}\|_{BL^p_{k, A,D_0}(B)}\leq C_\e D^{(k-1)(\frac 1 2 -\frac 1 p)+\e}
\left( \sum_{\theta\in \Theta_{D,V}}\big\| F_\theta \big\|_{L^p(B)}^p \right)^{1/p}
\end{equation}
holds for any dyadic number $D\geq 1$, any function $F$ on $\ZR^n$ with Fourier support in $N_{1/D^2}\Sigma$, any dyadic cube $B\in\cQ_{D^2}$ in $\ZR^n$, any $k$-dimensional subspace $V\in \cV_k$, and any integer $A\geq \bar A (\e)$.
\end{proposition}

\begin{proof}
By definition, we have
\begin{equation} \label{eq:def}
\|F_{D,V}\|^p_{BL^p_{k, A,D_0}(B)} = \sum_{Q\in\cQ_{D}:Q\subset B}
\big\|F_{D,V}\big\|^p_{BL^p_{k, A,D_0}(Q)}\,,
\end{equation}
by the local estimate \eqref{p-k-Q}, orthogonality and H\"older's inequality, we bound \eqref{eq:def} by
\begin{align*}
&\lesssim D^{\e^2p-n(\frac p2-1)} \sum_{Q\in\cQ_{D}:Q\subset B} \big\|F_{D,V}\big\|^p_{L^2(Q)}\\
&\sim D^{\e^2p-n(\frac p2-1)} \sum_{Q\in\cQ_{D}:Q\subset B} \big(\sum_{\theta\in \Theta_{D,V}} \|F_\theta\|_{L^2(Q)}^2\big)^{p/2}\\
& \lesssim D^{\e^2p} \sum_{Q\in\cQ_{D}:Q\subset B} \big(\sum_{\theta\in \Theta_{D,V}} \|F_\theta\|_{L^p(Q)}^2\big)^{p/2}\,.
\end{align*}
Note that for $V\in\cV_k$, we have $|\Theta_{D,V}|\lesssim D^{k-1}$, therefore the above is further bounded by
\begin{align*}
 &\lesssim D^{\e^2p+(k-1)(\frac p 2 -1)} \sum_{Q\in\cQ_{D}:Q\subset B} \sum_{\theta\in \Theta_{D,V}} \|F_\theta\|_{L^p(Q)}^p\\
 &\sim D^{\e^2p+(k-1)(\frac p 2 -1)} \sum_{\theta\in \Theta_{D,V}} \|F_\theta\|_{L^p(B)}^p\,,
\end{align*}
and the proof is complete by taking the $p$-th root.
\end{proof}

\section{An application of Theorem \ref{thm-kp}}\label{appli}
\setcounter{equation}0

Let $\ZB^{n-1}$ denote the unit ball in $\ZR^{n-1}$. 
For $f:\ZB^{n-1}\rightarrow \mathbb{C}$, $Ef$ represents the Fourier extension operator which is given by
\begin{equation}
 Ef (x)=\int_{\ZB^{n-1}}  e^{ix'\cdot \xi} e^{ix_n|\xi|^2} f(\xi) d\xi\,,
\end{equation}
where $x=(x',x_n)\in \ZR^n$. 

Recall from Lemma \ref{lem-rec} that we have parameters $A_j, K_j, j=1,2,\cdots,n,$ which are tiny positive powers of $R$ satisfying
$$
A_n<\cdots<A_2<A_1\ll K_1\ll K_2 \ll \cdots \ll K_n\,.
$$
In \cite{G2}, Guth proved the following sharp $k$-broad restriction estimates:

\begin{theorem}[$k$-broad restriction, \cite{G2}]\label{G2}
Let $1\leq m\leq n-1$. Then for any $p\geq \frac{2(n+m+1)}{n+m-1}$, for any $\e>0$, there is a constant $C_{p,\e}$ so that the following holds for any $U\in \cQ_{R}$ in $\ZR^n$:
\begin{equation}
 \|Ef\|_{BL^{p}_{m+1, A_m, K_m}(U)}\leq C_{p,\e} R^\e \|f\|_{L^2(\ZB^{n-1})}\,.
\end{equation}
\end{theorem}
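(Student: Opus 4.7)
The plan is to follow Guth's polynomial partitioning argument and carry out a double induction on the radius $R$ and on the broadness index $m$. After the wave packet decomposition of $\bR f$ at scale $R$ from Section \ref{WPD} and standard dyadic pigeonholing in the $L^2$-size of packets, one reduces to a normalized form where the number of surviving tubes is essentially controlled by $\|f\|_{L^2}$. The induction is set up as a quantitative statement where the constant gains a small factor $R^{-\e/2}$ upon decreasing either the radius or the broadness parameter, with $R \lesssim 1$ as a trivial base case.

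At each step one applies polynomial partitioning with a polynomial $P$ of degree $D$ a small power of $R$, whose zero set cuts $U$ into $\sim D^{n+1}$ cells each carrying a comparable fraction of the broad $L^p$-integral. This produces the standard dichotomy: either most of the broad norm lies in the interiors of the cells (\emph{cellular case}), or most of it lies in the $R^{1/2+\delta}$-neighborhood $W$ of $Z(P)$ (\emph{algebraic case}). In the cellular case, the inductive hypothesis on $R$ applies to each cell, and the B\'ezout-type bound that each tube meets $\lesssim D$ cells controls the $L^2$-overcount, so the $R^{-\e/2}$ gain per step defeats the partitioning loss. In the algebraic case, one covers $W$ by balls of radius $R^{1/2+\delta}$ and on each ball splits wave packets into those \emph{transverse} to $Z(P)$, which pierce only $\lesssim D$ balls and are handled by induction on $R$ at the smaller scale, and those \emph{tangent} to it.

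The main obstacle is the tangent subcase, and it is where the induction on $m$ enters. On each small ball $B_j$, the variety $Z(P)$ is $R^{1/2}$-close to an $n$-plane $\Pi_j$, forcing the directions $e(\theta)$ of tangent wave packets to cluster within $O(R^{-1/2})$ of the tangent space to $\Pi_j$. Combined with the $(m+1)$-broadness condition $\theta \in \Theta_R(V_1, \ldots, V_A)$ on the surviving blocks, this collapses the problem on $B_j$ to an effective $m$-broad problem at scale $R^{1/2+\delta}$, to which the induction on $m$ applies. The quantitative glue is a transverse equidistribution estimate in the spirit of (\ref{equi}), yielding an $L^2$-factor of order $(R^{-1/2})^{n-m}$ for tangent packets; balancing this factor against the number of covering balls and the loss from partitioning produces precisely the exponent threshold $p \geq 2(n+m+2)/(n+m)$, and pigeonholing between the cellular, transverse, and tangent contributions closes the induction.
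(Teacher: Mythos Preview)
The paper does not prove this statement at all: Theorem~\ref{G2} is quoted directly from Guth~\cite{G2} (see the sentence ``In \cite{G2}, Guth proved the following two theorems'' preceding Theorems~\ref{G1} and~\ref{G2}), and is used as a black box in Section~\ref{appli}. So there is no ``paper's own proof'' to compare your attempt against.

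That said, your sketch is broadly the right shape for Guth's argument in \cite{G2}: wave packet decomposition, polynomial partitioning, the cellular/algebraic dichotomy, and the transverse/tangent split in the algebraic case, with transverse equidistribution supplying the key $L^2$ gain. One point to be careful about: what you describe as ``induction on the broadness index $m$'' is not quite how Guth organizes the tangent case. The induction there is on the \emph{dimension of the ambient variety} (together with the parameter $A$), not on $m$; in the tangent case one restricts to a neighborhood of a variety of one lower dimension and repeats the partitioning there, and this recursion terminates after at most $n$ steps because the dimension strictly drops each time. The $(m+1)$-broadness is used by spending some of the $V_a$'s on the tangent plane of the variety, which is why one needs $A$ large but fixed rather than decreasing $m$. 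Your phrase ``collapses the problem on $B_j$ to an effective $m$-broad problem'' blurs this mechanism. If you were to write out the proof in full, that is the place to straighten out.
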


Applying the $l^2$ decoupling result in \cite{B} to go from $k$-broad estimates to regular $L^p$ estimates, Guth \cite{G2} established the following:

\begin{theorem}[linear restriction, \cite{G2}]\label{G1}
Let $U\in \cQ_{R}$ be a dyadic cube with side length $R$ in $\ZR^{n}$. Then
\begin{equation}
 \|E f\|_{L^p(U)}\leq C_\e R^\e \|f\|_{L^p(\ZB^{n-1})}\,.
\end{equation}
provided that
\begin{equation}
 p>\frac{2(3n+1)}{3n-3} \,\,\text{for}\,\, n\,\,{\rm odd}\,,
\end{equation}
and
\begin{equation}
  p>\frac{2(3n+2)}{3n-2} \,\,\text{for}\,\, n\,\,{\rm even}\,.
\end{equation} 
\end{theorem}

\vspace{.15in}
In this section, we present that \emph{the $l^p$ decoupling result in Theorem \ref{thm-kp} together with the $k$-broad restriction estimates in Theorem \ref{G2} yield the linear restriction estimates in Theorem \ref{G1}}.

Let $\beta_p\geq \epsilon$ denote the best constant such that
\begin{equation}
 \|Ef \|_{L^p(U)}\leq R^{\beta_p} \|f\|_{L^p(\ZB^{n-1})}\,,
\end{equation}
for any sufficiently large dyadic number $R$ and any cube $U\in\cQ_{R}$ in $\ZR^n$.

Given $U\in \cQ_{R}$ in $\ZR^n$. Note that $\bR f$ localized in $U$ is a function with Fourier support
in $N_{1/R}\Sigma$. Recall that we partition $N_{1/D^2}\Sigma$ into blocks $\theta$'s in $\Theta_D$. For each $\theta\in \Theta_D$, $\rm{Proj}(\theta)$ denotes the projection of $\theta$ onto $\ZR^{n-1}\times \{0\}$,
 and $f_\theta$ denotes the restriction of $f$ to $\rm{Proj}(\theta)$.
Applying (\ref{recur1}) in Lemma \ref{lem-rec}, we control $\|Ef\|_{L^p(U)}$
by
\begin{align} 
&CA_1\big\| \max_{\theta\in \Theta_{K_1}}\big| Ef_\theta \big| \big\|_{L^p(U)}
+ K_m^{2n}\|Ef\|_{BL^p_{m+1, A_m, K_m}(U)} \label{linear-broad} \\
+ &\sum_{j=2}^m K_j^{\e^{10}}
\bigg(   \sum_{B\in \cQ_{K_j^2}: B\subset U }
  \sup_{V\in \cV_j}
  \big\|Ef_{K_j,V}\big\|^p_{BL^p_{j, A_j/2,K_{j-1}}(B)}   \bigg)^{\frac{1}{p}} \,, \notag
\end{align}

\noindent for any $m\in\{2, \cdots, n-1\}$. Here $K_j$'s and $A_j$'s are defined as in Lemma \ref{lem-rec}.

First we use parabolic rescaling and induction on radius to control the first term in \eqref{linear-broad}.
\begin{lemma} \label{lin-max}
Let $p\geq \frac{2n}{n-1}$, and $U\in \cQ_{R}$ in $\ZR^n$. Then
 \begin{equation}
  \big\| \max_{\theta \in \Theta_{K_1}} \big|Ef_\theta \big| \big\|_{L^p(U)}
\lesssim \bigg(\frac{R}{K_1}\bigg)^{\beta_p}\|f\|_{L^p(\ZB^{n-1})}\,.
 \end{equation}
\end{lemma}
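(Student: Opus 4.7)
The plan is to reduce to a single-$\theta$ estimate via the pointwise bound
$\max_{\theta\in\Theta_{K_1}}|\bR f_\theta|^p \leq \sum_{\theta\in\Theta_{K_1}}|\bR f_\theta|^p$
and then parabolic-rescale each $\bR f_\theta$. This immediately gives
\[
\bigl\|\max_{\theta\in\Theta_{K_1}}|\bR f_\theta|\bigr\|_{L^p(U)}^p \leq \sum_{\theta\in\Theta_{K_1}}\|\bR f_\theta\|_{L^p(U)}^p,
\]
so everything reduces to a sharp single-cap estimate for $\|\bR f_\theta\|_{L^p(U)}$.

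For the parabolic rescaling, fix $\theta\in\Theta_{K_1}$, let $\xi_0$ be the center of $\mathrm{Proj}(\theta)\subset\ZR^n$, substitute $\xi=\xi_0+\eta/K_1$ in the integral defining $\bR f_\theta$, and then set $y=(x+2t\xi_0)/K_1$ and $s=t/K_1^2$. A standard computation yields $|\bR f_\theta(x,t)|=K_1^{-n}|\bR\wt{f}_\theta(y,s)|$ with $\wt{f}_\theta(\eta):=f(\xi_0+\eta/K_1)$ supported in $B^n$. As $(x,t)$ ranges over $U\in\cQ_{R^2}$, the new variables $(y,s)$ range over a box of dimensions $\lesssim R^2/K_1\times\cdots\times R^2/K_1\times R^2/K_1^2$, which is covered by $O(1)$ cubes in $\cQ_{K^2}$ with $K=R/\sqrt{K_1}$.

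Applying the defining inequality of $\beta_p$ at this scale, namely $\|\bR\wt{f}_\theta\|_{L^p(B)}\leq K^{\beta_p}\|\wt{f}_\theta\|_p$ on each such cube, and unwinding the rescaling using the Jacobian $dx\,dt=K_1^{n+2}\,dy\,ds$ together with the identity $\|\wt{f}_\theta\|_p^p=K_1^n\|f_\theta\|_p^p$, I arrive at
\[
\|\bR f_\theta\|_{L^p(U)}^p \lesssim K_1^{2n+2-np}\bigl(R/\sqrt{K_1}\bigr)^{p\beta_p}\|f_\theta\|_p^p.
\]
Summing over $\theta$ and invoking the essential disjointness $\sum_\theta\|f_\theta\|_p^p\lesssim\|f\|_p^p$ then gives $\bigl\|\max_\theta|\bR f_\theta|\bigr\|_{L^p(U)}^p \lesssim K_1^{2n+2-np}(R/\sqrt{K_1})^{p\beta_p}\|f\|_p^p$.

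The main step requiring care is the exponent bookkeeping: the hypothesis $p\geq 2(n+1)/n$ is exactly the condition $2n+2-np\leq 0$, which forces the $K_1^{2n+2-np}$ factor to be $\lesssim 1$ and leaves the claimed bound. The remaining points are routine: $R/\sqrt{K_1}$ is certainly a sufficiently large dyadic number since $K_1$ is a tiny power of $R$, so the defining inequality for $\beta_p$ applies; the cover of the rescaled box by $O(1)$ dyadic cubes of side $(R/\sqrt{K_1})^2=R^2/K_1$ costs only an absolute constant; and the $p$-disjointness of $\{\mathrm{Proj}(\theta)\}_{\theta\in\Theta_{K_1}}$ is immediate from their essentially disjoint supports, with no logarithmic loss.
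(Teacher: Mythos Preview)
Your proof is correct and follows essentially the same route as the paper's: bound the maximum by the $\ell^p$-sum over $\theta$, apply parabolic rescaling to each cap to get $\|\bR f_\theta\|_{L^p(U)}\lesssim K_1^{\frac{2(n+1)}{p}-n}(R/\sqrt{K_1})^{\beta_p}\|f_\theta\|_p$, sum using the disjointness of the $f_\theta$, and absorb the $K_1$-factor using $p\geq 2(n+1)/n$. The paper simply quotes ``parabolic rescaling'' for the single-cap estimate, whereas you spell out the change of variables and the Jacobian bookkeeping explicitly; the argument is otherwise identical.
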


\begin{proof}
By parabolic rescaling, we get

\begin{align}
   &\big\| \max_{\theta \in \Theta_{K_1}} \big| Ef_\theta \big| \big\|_{L^p(U)}^p
  \leq
  \sum_{\theta \in \Theta_{K_1}}\|
  Ef_\theta\|^p_{L^p(U)} \label{resc-K1}
\\
\leq
  &\sum_{\theta \in \Theta_{K_1}}
  \bigg(K_1^{\frac{2n}{p}-(n-1)} \big(\frac{R}{K_1}\big)^{\beta_p}\bigg)^p\|f_\theta\|^p_p
  \sim \bigg(K_1^{\frac{2n}{p}-(n-1)} \big(\frac{R}{K_1}\big)^{\beta_p}\|f\|_p\bigg)^p\,, \notag
\end{align}
and this completes the proof since $p\geq \frac{2n}{n-1}$.
\end{proof}

Next we use the $l^p$ decoupling result in Theorem \ref{thm-kp} and parabolic rescaling to control the third term in \eqref{linear-broad}.
\begin{lemma} \label{error-j}
Suppose that $2\leq j \leq n$, and $\frac{2j}{j-1} \geq p \geq \frac{2(n-\frac{j-1}{2})}{n-\frac{j+1}{2}}$. Then for any $U\in \cQ_{R}$ in $\ZR^n$,
we have
\begin{equation}\label{Xi-f}
 \bigg(   \sum_{B\in \cQ_{K_j^2}: B\subset U }
  \sup_{V\in \cV_j}
  \big\|Ef_{K_j,V}\big\|^p_{BL^p_{j, A_j/2,K_{j-1}}(B)}   \bigg)^{\frac{1}{p}}
  \leq C_\e K_j^{\e^2}
 \bigg( \frac{R}{K_j}\bigg)^{\beta_p} \|f\|_p\,.
\end{equation}
\end{lemma}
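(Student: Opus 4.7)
The plan is to apply the $l^p$-decoupling result Theorem \ref{thm-kp} cube by cube and subspace by subspace inside the definition of $\Xi_{j,p}(f)$, and then use parabolic rescaling on each cap $\theta \in \Theta_{K_j}$ to bring in the inductive constant $\beta_p$. Concretely, fix $B\in\cQ_{K_j^2}$ with $B\subset U$ and $V\in\cV_j$. Since $\bR f_{K_j,V}$ has Fourier support in $N_{1/K_j^2}\Sigma$, and since $(\log K_j)^{L}\ll A_j/2\ll K_{j-1}$ (this is exactly how $A_j$ and $K_{j-1}$ were chosen as tiny powers of $R$), we may apply Theorem \ref{thm-kp} with $D=K_j$, $D_0=K_{j-1}$, $k=j$, $A=A_j/2$ to obtain, for any $\e'>0$,
\begin{equation*}
\bigl\|\bR f_{K_j,V}\bigr\|_{BL^p_{j,A_j/2,K_{j-1}}(B)}\leq C_{\e'} K_j^{\,n-\frac{2(n+1)}{p}+\e'}\bigl\|\bR f_{K_j,V}\bigr\|_{p,K_j,B^*}.
\end{equation*}

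The right-hand side decoupling norm expands as $\sum_{\theta\in \Theta_{K_j,V}}\|\bR f_\theta\|_{L^p(B^*)}^p$ after taking $p$-th powers, and since $\Theta_{K_j,V}\subset \Theta_{K_j}$, the $\sup_V$ can be bounded trivially by extending the sum to all $\theta\in\Theta_{K_j}$. Summing over $B\in \cQ_{K_j^2}$ with $B\subset U$ and using the fact that the $K_{j-1}$-dilates $\{B^*\}$ have overlap at most $K_{j-1}^{O(1)}$, we obtain
\begin{equation*}
\Xi_{j,p}(f)(A_j/2,K_j,U)^p\lesssim C_{\e'}^p K_j^{(n-\frac{2(n+1)}{p}+\e')p}\, K_{j-1}^{O(1)}\sum_{\theta\in\Theta_{K_j}}\bigl\|\bR f_\theta\bigr\|_{L^p(\widetilde U)}^p,
\end{equation*}
where $\widetilde U$ is a slight enlargement of $U$ still contained in an $R^2$-cube.

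Now parabolic rescaling: for each $\theta\in\Theta_{K_j}$, the substitution $\xi=\xi_\theta+\eta/K_j$ converts $\bR f_\theta$ into a constant-modulus factor times an extension operator $\bR g$ on the unit ball, with the image of $\widetilde U$ fitting inside a cube of side $(R/\sqrt{K_j})^2$. Applying the definition of $\beta_p$ at this new scale and tracking the Jacobians yields the standard rescaling estimate
\begin{equation*}
\bigl\|\bR f_\theta\bigr\|_{L^p(\widetilde U)}\lesssim K_j^{\frac{2(n+1)}{p}-n}\Bigl(\frac{R}{\sqrt{K_j}}\Bigr)^{\beta_p}\|f_\theta\|_p.
\end{equation*}
Since the $f_\theta$ are disjointly supported in $B^n$, one has $\sum_\theta \|f_\theta\|_p^p=\|f\|_p^p$. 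Substituting back, the two $K_j$-powers of exponents $n-2(n+1)/p$ and $2(n+1)/p-n$ cancel, and we are left with
\begin{equation*}
\Xi_{j,p}(f)(A_j/2,K_j,U)\lesssim C_{\e'} K_j^{\e'}\, K_{j-1}^{O(1)/p}\Bigl(\frac{R}{\sqrt{K_j}}\Bigr)^{\beta_p}\|f\|_p.
\end{equation*}

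The final step is parameter choice. Because $K_{j-1}$ is a $K_j^{\e^{\sqrt{L}/2}}$-power of $K_j$, the overlap loss $K_{j-1}^{O(1)}$ is bounded by $K_j^{\e^2/2}$ for $L$ sufficiently large. Choosing $\e'=\e^2/2$ then gives the desired bound $C_\e K_j^{\e^2}(R/\sqrt{K_j})^{\beta_p}\|f\|_p$. The only real subtlety is the bookkeeping of parameters—verifying the admissibility condition $(\log K_j)^L\ll A_j/2\ll K_{j-1}$ in Theorem \ref{thm-kp} and choosing $\e'$ small enough that the loss from Theorem \ref{thm-kp} together with the overlap loss from the $B^*$ dilation combine to $K_j^{\e^2}$; once this is set up correctly, everything else is a direct substitution.
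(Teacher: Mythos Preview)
Your proposal is correct and follows essentially the same route as the paper: apply Theorem~\ref{thm-kp} on each $B\in\cQ_{K_j^2}$, drop the $\sup_V$ by enlarging $\Theta_{K_j,V}$ to $\Theta_{K_j}$, sum over $B$, and then parabolically rescale each cap. You are in fact slightly more careful than the paper, which silently replaces $\sum_{B\subset U}\|\bR f_\theta\|_{L^p(B^*)}^p$ by $\|\bR f_\theta\|_{L^p(U)}^p$ and writes the Theorem~\ref{thm-kp} loss directly as $K_j^{\e^2}$; your explicit tracking of the $B^*$-overlap and the choice $\e'=\e^2/2$ make the same absorption transparent.
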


\begin{proof}

By the decoupling result \eqref{dec-0} in Theorem \ref{thm-kp}, we get
\begin{align*}
&\bigg(   \sum_{B\in \cQ_{K_j^2}: B\subset U }
  \sup_{V\in \cV_j}
  \big\|Ef_{K_j,V}\big\|^p_{BL^p_{j, A_j/2,K_{j-1}}(B)}   \bigg)^{\frac{1}{p}}\,.
\\
\lesssim
 & K_j^{(j-1)(\frac 12 -\frac 1p)+\e^2}
  \bigg(
  \sum_{B\in \cQ_{K_j^2}: B\subset U } 
  \sup_{V\in \cV_j}
  \sum_{\theta\in \Theta_{K_j,V}} \big\|Ef_\theta\big\|_{L^p(B)}^p
 \bigg)^{\frac 1p} \\
 \lesssim & K_j^{(j-1)(\frac 12 -\frac 1p)+\e^2} \bigg( \sum_{\theta\in \Theta_{K_j}} \big\|Ef_\theta\big\|_{L^p(U)}^p
 \bigg)^{\frac 1p}
\end{align*}
and by parabolic rescaling this is bounded by
\begin{align*}
\lesssim &K_j^{(j-1)(\frac 12 -\frac 1p)+\e^2} K_j^{\frac{2n}{p}-(n-1)} \bigg(\frac{R}{K_j}\bigg)^{\beta_p}
\bigg(\sum_{\theta \in \Theta_{K_j}} \big\| f_{\theta}\|_{L^p}^p
  \bigg)^{\frac 1 p}
\\
\lesssim & K_j^{\e^2}
 \bigg( \frac{R}{K_j}\bigg)^{\beta_p} \|f\|_p \,,
\end{align*}
provided that 
$$
(j-1)(\frac 12 -\frac 1p)+\frac{2n}{p}-(n-1) \leq 0\,
$$
that is,
$$
p\geq \frac{2(n-\frac{j-1}{2})}{n-\frac{j+1}{2}}\,.
$$
\end{proof}

\begin{remark}
Note that $\frac{2j}{j-1} \geq \frac{2(n-\frac{j-1}{2})}{n-\frac{j+1}{2}}$ holds if and only if $j \leq \frac{2n+1}{3}$. In our application below, we only care about the cases where $2\leq j \leq n/2$, so the constraint $j \leq \frac{2n+1}{3}$ is acceptable.
\end{remark}

Now we can derive the linear restriction estimate in Theorem \ref{G1}, using the inductive arguments above together with the $k$-broad restriction estimates in Theorem \ref{G2}. Indeed, we prove
\begin{equation}
 \|Ef\|_{L^p(U)}\leq C_\e R^\e \|f\|_{L^p(\ZB^{n-1})}\,.
\end{equation}
by inductions. We bound $\|Ef\|_{L^p(U)}$ by the three terms in \eqref{linear-broad}. By Lemma \ref{lin-max}, we can control the first term in \eqref{linear-broad} by induction, requiring that 
$$
p\geq \frac{2n}{n-1}\,.
$$
And by Lemma \ref{error-j}, we can control the third term in \eqref{linear-broad} by induction, requiring that 
$$
\frac{2j}{j-1} \geq p \geq \frac{2(n-\frac{j-1}{2})}{n-\frac{j+1}{2}} \quad \rm{for}\,\, j=2,\cdots,m\,.
$$
Finally, using the $k$-broad restriction estimates in Theorem \ref{G2}, we can get the desired bound for the second term in \eqref{linear-broad}, for
$$
p\geq \frac{2(n+m+1)}{n+m-1}\,.
$$
In summary, we have the desired bound for $p\geq \frac{2(n+m+1)}{n+m-1}$ provided that
$$
\frac{2(n+m+1)}{n+m-1} \geq \frac{2(n-\frac{m-1}{2})}{n-\frac{m+1}{2}}\,,
$$
that is,
$$
m\leq \frac {n}{2}\,.
$$
By taking $m=(n-1)/2$ when $n$ is odd, and $m=n/2$ when $n$ is even, we obtain the linear restriction estimate in Theorem \ref{G1}.

\begin{acknowledgement}
The first author is supported by the National Science Foundation under Grant No. 1638352, as well as support from the Shing-Shen Chern Foundation. The  authors wish to express their indebtedness to Larry Guth for his hospitality when visiting MIT
and his inspirations in mathematics. 
\end{acknowledgement}

\vspace{0.6cm}

\end{document}